\theoremstyle{plain}
\newtheorem{thm}{Theorem}
\newtheorem{prop}[thm]{Proposition}
\theoremstyle{definition}
\newtheorem{defn}[thm]{Definition}
\theoremstyle{remark}
\newtheorem{rmk}[thm]{Remark}
\gdef\GitPageFooter{$Format: v:\texttt{%h %d}; last edit: %aN on %ai.$}
\endgroup

\begin{document}
\title[Singularities of axi-symmetric membranes]{Singularities of axially symmetric time-like minimal submanifolds in Minkowski space}
\author[WWY Wong]{Willie Wai Yeung Wong}
\address{Michigan State University}
%\thanks{\GitPageFooter}
\email{wongwwy@member.ams.org}
\subjclass[2010]{35L72, 35B07, 35B44, 35B65}
% 35 - PDE, 35L - Hyperbolic, 35M - mixed type, 35S - psDO paraDO
% 83 - Gravity, 83C - GR, 83C20 special solutions and symmetry classes
%   83C22 EinsteinMaxwell 83C35 grav waves 83C57 black holes 83C75
%   singularities and cosmic censorship 83C60 spinor methods 83C55
%   macroscopic interaction of gravitation with matter
% 58 - Global analysis, 58J - PDE on manifolds, 58J45 hyperbolic 58J05
%   elliptic 58J40 psDO 58J47 propagation singularities

\begin{abstract}
We prove that there does not exist global-in-time axisymmetric solutions to the time-like minimal submanifold system in Minkowski space. We further analyze the limiting geometry as the maximal time of existence is approached. 
\end{abstract}

\maketitle

\section{Introduction}

The equations describing time-like minimal submanifolds of Minkowski space form hyperbolic systems of partial differential equations \cite{Milbre2008, Lindbl2004, Brendl2002, AurChr1979}. Such manifolds describe classical, relativistic evolution of extended test objects \cite{Hoppe2013}, with applications to high energy physics \cite{Kibble1976, VilShe1994} and the study of singularities of semilnear wave equations \cite{Neu1990, Jerrar2011}. 
From the evolutionary point of view of the associated Cauchy problem, it is of interest to classify and differentiate initial data sets exhibiting (future) global-in-time solutions from those that blow-up in finite time. 

Trivially one observes that that any minimal immersion $\Sigma$ into $\mathbf{R}^N$ gives rise to a immersed time-like minimal submanifold $M = \mathbf{R}\times \Sigma$ in $\mathbf{R}^{1,N}$, which exists globally in time. For noncompact spatial-sections, the case where $\Sigma$ is a small perturbation of a linear subspace $\mathbf{R}^d\subsetneq \mathbf{R}^N$ has been previously studied by Brendle in the case $d \geq 3$ and $N = d+1$ \cite{Brendl2002}, Lindblad in the case $d \geq 1$ and $N = d+1$ \cite{Lindbl2004} (see also \cite{Wong2016p} in the case $d = 1$), and Allen, Andersson, and Isenberg in general codimension \cite{AlAnIs2006}. 
They found in this case the perturbed solutions exist globally in time. The general machinery used in those works is the formulation of the evolution equation in the \emph{graphical guage} (see \eqref{eq:graphical} below) and applying the theory of small-data global wellposedness for quasilinear wave equations with null conditions. 
More recently, the stability of $\Sigma$ being the catenoid in $\mathbf{R}^3$ was studied in the axi-symmetric class \cite{DoKrSW2016}. There it was found, in agreement with its variational instability \cite{FisSch1980} that for generic perturbations the solutions diverge exponentially from the catenoid, but also that there exists a co-dimension 1, Lipschitz, center-stable-manifold of initial data in a neighborhood of the catenoid initial data that give rise to global forward-in-time solutions which converge back to the catenoid. In that work the singularity formation is not explored, and it was speculated based on numerical evidence that for perturbations that shrink the ``throat'' of the catenoid, the instability eventually leads to ``the collapse of the throat''. 

In the compact case it is easy to see that for spherically symmetric initial data $\mathbf{S}^d$ immersed in a $\mathbf{R}^{d+1}$ subspace of $\mathbf{R}^N$, that the evolution equations reduce to ordinary differential equations that must blow-up in finite time through a convexity argument. For initial data that is not homogeneous, the only previous works are in the case of strings, where $\Sigma$ has dimension 1 (and hence is a immersion of $\mathbf{S}^1$). Nguyen and Tian studied the case of strings in $\mathbf{R}^{1,2}$ and showed that all initial data leads to finite-time singularity formation \cite{NguTia2013}. 
Jerrard, Novaga, and Orlandi studied the case of higher codimensions \cite{JeNoOr2015} and found that: (a) in higher dimensions there exist both initial data that lead to singularity formation in finite time, and those that lead to global-in-time existence; (b) in $\mathbf{R}^{1,3}$ singularity formation is a stable phenonmenon; (c) in $\mathbf{R}^{1,N}$ with $N \geq 4$ singularity formation is non-generic. 
In both \cite{NguTia2013} and \cite{JeNoOr2015}, the starting point is the observation that the equation of motion of cosmic strings are completely integrable, and in a suitable gauge reduces to the linear wave equation. Therefore solutions necessarily exists, for all time, as a smooth mapping (assuming smooth initial data) of $\mathbf{R}\times\mathbf{S}^1 \to \mathbf{R}^{1,N}$, and it only sufficies to check for the failure of the map to be an \emph{immersion}. 

It may be worthwhile to compare the above results with the well-known statement that, for initial data a closed immersed submanifold (arbitrary codimension) of Euclidean space, the \emph{mean curvature flow} extinguishes in finite time \cite{Smoczy2012}. In broad strokes, one may attribute this difference to the availability of the maximum principle in parabolic (mean curvature flow) but not hyperbolic (time-like minimal surface) equations. Furthermore, by taking Carteisian products, the results of \cite{JeNoOr2015} produce classes of compact, higher dimensional initial data whose evolution under the time-like minimal surface equation exists for all time. Note however, all these examples have codimension at least 2. It is unknown whether there are compact \emph{hypersurfaces} $\Sigma$ of $\mathbf{R}^{N}$ giving rise to a global-in-time time-like minimal hypersurface in $\mathbf{R}^{1,N}$; it is tempting to conjecture in the negative.

The main result of this article is 
\begin{thm}[Rough Statement] \label{thm:rough}
	Given axially symmetric compact initial data of arbitrary codimension, its evolution under the time-like minimal surface equation in Minkowski space forms singularities in finite time, in at least one of the future or the past, due to failure of immersivity.
\end{thm}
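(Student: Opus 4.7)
The plan is to use the axial symmetry to reduce the problem to a lower-dimensional quasilinear hyperbolic system for a profile submanifold, and then to prove finite-time singularity formation via a concavity argument applied to a natural geometric functional of the profile.

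First, I would quotient by the $SO(d)$ axial action, identifying $M$ with a profile submanifold $M_0$ living in the quotient half-space. Compactness of the initial data forces each spatial time-slice $\gamma_t$ of $M_0$ to be compact, and axis regularity imposes perpendicularity of $\gamma_t$ with $\{r=0\}$ wherever they meet. The minimality of $M$ reduces, via the coarea formula, to a weighted minimal surface equation for $M_0$ with weight $w(r) = r^{d-1}$, the volume of the generic $SO(d)$-orbit. The resulting system is a quasilinear hyperbolic PDE which, in contrast with the pure cosmic string case of \cite{NguTia2013, JeNoOr2015}, is not integrable owing to the weight.

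Second, I would seek a functional $\Phi(t)$ of the profile $\gamma_t$ whose second time derivative has a definite sign. The motivating case is the spherically symmetric one, where for a sphere of radius $R(t)$ in $\mathbf{R}^{d+1}$ the minimality condition yields $R\ddot R + d(1 - \dot R^2) = 0$, so $R$ is strictly concave on its domain of existence, and since the time-like condition $|\dot R| < 1$ forbids $R$ from growing faster than linearly, $R$ must vanish in finite time in at least one of the past or future directions. For the general axially symmetric case, a candidate $\Phi$ is either an extremum of $r$ along $\gamma_t$ or an appropriately weighted moment such as $\int_{\gamma_t} r^k\, dV_{\gamma_t}$; one then computes $\ddot\Phi$ using the reduced PDE and the minimality condition, aiming for a concavity or Riccati-type inequality that, in combination with the time-like constraint, forces $\Phi$ to vanish in finite time.

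Third, I would interpret the vanishing of $\Phi$ as a failure of immersivity for the original $M$. If $\Phi$ captures the size of $\gamma_t$ in the $r$-direction, its vanishing means the profile collapses onto the axis, causing the $SO(d)$-orbit to degenerate and the induced metric on $M$ to become degenerate along the collapsed locus. More generally, any vanishing of a volume-type $\Phi$ implies that either the profile collapses to a lower-dimensional subset or null directions emerge on $M$; both are forms of immersion failure.

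The main obstacle is selecting the right $\Phi$ so as to obtain a clean concavity inequality in the presence of the weight $w(r) = r^{d-1}$. Because the weight contributes lower-order terms whose sign is not automatically controlled, the argument must carefully match extrinsic-curvature contributions against the weight, likely exploiting the null structure of the time-like minimal surface system. A secondary challenge is the boundary analysis at the axis: when $\gamma_t$ terminates on $\{r=0\}$, one must rule out spurious flux contributions and provide a propagation-of-regularity argument ensuring the profile remains smooth up to the first singularity of $M$.
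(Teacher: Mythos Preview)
Your overall plan---symmetry reduction followed by a concavity argument for a geometric functional, then identifying the singularity as loss of immersivity---matches the paper's strategy. However, the proposal is missing the central technical idea that makes this work, and your candidate functionals would not yield a clean inequality as stated. The paper does not work with the profile equipped with an induced volume $dV_{\gamma_t}$ and weight $r^{d-1}$; instead it passes to the \emph{co-moving gauge} together with a further reparametrization of the $\mathbf{S}^1$ factor so that $|g_{tt}|/\sqrt{|\det g|}$ is constant (equivalently, $\sqrt{|\det g|}/g_{yy} = C r_1^{2d_1}\cdots r_k^{2d_k}$). In this gauge the spatial principal term becomes an exact $y$-derivative, so integrating the $r_j$-equation over $\mathbf{S}^1$ with the \emph{fixed} measure $dy$ gives immediately $\ddot{\overline{r_j}} = \int_{\mathbf{S}^1} d_j g_{tt}/r_j\, dy < 0$, with no lower-order error terms to control. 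Your proposed moments $\int_{\gamma_t} r^k\, dV_{\gamma_t}$ carry a time-dependent measure, and an extremum of $r$ along $\gamma_t$ is not governed by an ODE; neither gives the needed sign without the gauge. Equally important, the same gauge choice makes the quasilinear coefficients depend only on the $r_i$ (not on their derivatives), which is what allows the breakdown criterion to be stated one derivative lower and forces the singularity to be loss of immersivity rather than shock formation. You allude to ``null structure'' but do not identify this mechanism.

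There is also a conceptual mismatch with the paper's setup. The paper's notion of axial symmetry is a warped product $\mathbf{S}^1\times\mathbf{S}^{d_1}\times\cdots\times\mathbf{S}^{d_k}$ with generatrix $(z,r_1,\ldots,r_k):\mathbf{S}^1\to\mathbf{R}^{N-d+1-k}\times(\mathbf{R}_+)^k$ and all $r_i>0$; the generatrix is a \emph{closed curve} that never meets the axes. Your framework of a single $SO(d)$ action with profile in a half-space possibly meeting $\{r=0\}$ is a different symmetry class, and your ``secondary challenge'' of axis boundary flux does not arise here. Conversely, the paper's closed-$\mathbf{S}^1$ topology is exactly what makes the integration-by-parts step boundaryless. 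In short: the concavity idea is right, but the decisive step you flag as the ``main obstacle''---choosing $\Phi$ so that the spatial and weight contributions cancel---is resolved in the paper not by a clever choice of moment but by a gauge in which there is nothing to cancel.
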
 
While the result provides a first step toward understanding higher dimensional behavior of spatially-compact time-like minimal immersions, the symmetry assumptions imposed make it much less general compared to the works in $d = 1$ \cite{NguTia2013, JeNoOr2015}.
Nevertheless it is interesting to not only produce a large class of blow-up solutions, but also understand the blow-up mechanism (which is similar to what happens in $d = 1$).  

From the point of view of partial differential equations indeed, the most interesting aspect of the result is not so much that singularities form in finite time, but that we have a breakdown criterion asserting that blow-up reflects failure of immersivity. 
As is well-known (see also the formulation \eqref{eq:graphical} and \eqref{eq:harmmap}) the equations of motion governing time-like minimal immersions can be written in the form of a quasidiagonal system of quasilinear wave equations, the coeffients of which principal part depends on the \emph{first derivatives} of the unknown. The classical breakdown criterion for such wave equations requires controlling the coefficients in $W^{1,\infty}(\Sigma)$, or, in other words, controlling up to two derivatives of the unknown in $L^\infty$. 
It is this fact (together with Sobolev embedding) that governs the classical local-wellposedness regularity threshold placing initial data in regularity class $H^s$ with $s > \frac{d+4}{2}$, where $d$ is the dimension of the spatial slice $\Sigma$. Furthermore, it is known that for generic quasilinear wave equations, shock formation (here it can be understood as blow-up of the second derivatives of the unknown when the first and zero derivatives remain bounded) is a possible source of singularities \cite{Alinha2001, Alinha2001a, Christ2007a, SpHoLW2016}; genericity is understood in the sense of \emph{failing} the null conditions \cite{Alinha2001, Klaine1984a, Klaine1986, Christ1986}. 
It turns out, in the axially symmetric case, there exists an appropriate choice of gauge (analogous to the gauge used in \cite{NguTia2013, JeNoOr2015} to express the equations of motion as linear wave equations) in which the equation vastly simplifies. The availability of this gauge is related to the satisfaction of the null conditions by \eqref{eq:graphical}, but also depends strongly on the axial-symmetry ansatz. (For comparison, symmetry reductions of generic quasilinear wave equations exhibit also shock formation \cite{John1974,John1985}.) In this gauge, the quasilinear coefficients depend only on the unknown, but not its derivatives, and therefore we have available the breakdown criterion at one lower order of regularity. More importantly, however, the gauge also comes equipped with \emph{a priori} $L^\infty$ control on the derivatives of the unknowns, \emph{provided that the quasilinear metric coefficients do not degenerate}. From this we obtain the restriction on the possible blow-up mechanisms. The proof of the necessity of finite-time blow-up then relies on a convexity argument, similar to what is used in the spherical case mentioned previously.  

In sections \ref{s:WMcnx} and \ref{s:conserve} we recall some basic constructions; most of the material are previously known though the presentation and interpretation may be different. The main new contributions to the subject are found in section \ref{s:axialsym} where we define the axial symmetry ansatz and derive some basic properties of the reduced equations, and in section \ref{s:breakdown} where we state and prove the main theorem. The focus of this paper is on the case of spatially compact solutions; in section \ref{s:noncompact} we briefly foray into the noncompact case. 

\section{Connection to wave maps}
\label{s:WMcnx}
Our analysis starts with the following global formulation of the time-like minimal hypersurface equation. Let $M$ be a $(1+d)$-dimensional manifold, and $\phi: M \to \mathbf{R}^{1,N}$ an immersion such that the pull-back metric $g = \phi^*\eta$ ($\eta$ being the Minkowski metric) is Lorentzian. Let $x^\mu|_{\mu = 0, \ldots, N}$ denote the standard rectangular coordinates of $\mathbf{R}^{1,N}$. In \cite{AurChr1979} it was shown that $\phi$ is a minimal immersion if and only if the equation 
\begin{equation}\label{eq:harmmap}
	\Box_{g} (x^\mu\circ\phi) = 0
\end{equation} 
is satisfied for every $\mu$. We remark here that this is in fact a manifestation of the following fact, well-known in the Riemannian case in the harmonic map literature. 

\begin{prop}
	Let $M$ and $P$ be smooth manifolds, where $P$ is equipped with a pseudo-Riemannian metric $h$. Suppose $\phi:M \to P$ is an immersion with nondegenerate pull-back metric $g = \phi^* h$. Then the following are equivalent:
	\begin{enumerate}[(i)]
		\item $\phi$ is minimal;
		\item $\phi$ is a harmonic map $(M,g) \to (P,h)$. 
	\end{enumerate}
\end{prop}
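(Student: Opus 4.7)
The plan is to identify the tension field $\tau(\phi) = \operatorname{tr}_g \nabla d\phi$, whose vanishing defines harmonicity, with the mean curvature vector $H = \operatorname{tr}_g II$, whose vanishing defines minimality. The identification should follow from the generalized Gauss formula, once the nondegeneracy of the pull-back metric is used to secure the necessary orthogonal splitting of $\phi^* TP$.

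The first step is to exploit the nondegeneracy hypothesis: because $g = \phi^* h$ is nondegenerate on $M$, the image $d\phi(TM) \subset \phi^*TP$ is a nondegenerate subbundle, so one has an orthogonal direct sum decomposition $\phi^* TP = d\phi(TM) \oplus d\phi(TM)^\perp$. This is the point at which the pseudo-Riemannian nature of $h$ requires care: in Riemannian signature nondegeneracy of the induced metric is automatic, but here it is the substantive content of the hypothesis that $g$ is nondegenerate.

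Next comes the generalized Gauss formula $\nabla^{\phi^*TP}_X d\phi(Y) = d\phi(\nabla^g_X Y) + II(X,Y)$, in which the left side uses the pullback of the Levi-Civita connection on $(P,h)$, the two terms on the right are its tangential and normal parts under the splitting above (with $TM$ identified with $d\phi(TM)$), and $II$ is the symmetric, normal-valued second fundamental form. To justify this decomposition one verifies that the tangential part, viewed as a connection on $TM$, is torsion-free and compatible with $g = \phi^*h$; by uniqueness of the Levi-Civita connection it must coincide with $\nabla^g$, which is what produces the clean identity $(\nabla d\phi)(X,Y) = II(X,Y)$.

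Taking the $g$-trace then yields $\tau(\phi) = \operatorname{tr}_g (\nabla d\phi) = \operatorname{tr}_g II = H$, and the equivalence (i) $\Leftrightarrow$ (ii) is immediate. The only step requiring genuine care, and which I expect to be the main obstacle, is the Koszul-type argument confirming the Levi-Civita character of the tangential projection when $h$ has indefinite signature; the derivation is classical in the Riemannian case, and the point is to check that nondegeneracy of $g$ already suffices to avoid the pathologies that would arise if the induced metric on $d\phi(TM)$ were allowed to degenerate.
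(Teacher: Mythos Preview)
Your proposal is correct and follows essentially the same route as the paper: the paper's sketch writes the second fundamental form as $k(X,Y) = D_{\phi_* X}\phi_* Y - \phi_*(\nabla_X Y)$ and then takes the $g$-trace (after passing to normal coordinates), which is exactly your identification $\nabla d\phi = II$ followed by tracing to get $\tau(\phi) = H$. Your write-up is more explicit about the role of the nondegeneracy hypothesis in securing the orthogonal splitting and in characterizing the tangential projection as the Levi-Civita connection of $g$, but the underlying argument is the same.
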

\begin{proof}[Sketch]
	Let $D$ denote the Levi-Civita connections of $P$ and $\nabla$ that of $M$ (with respect to $g$), the (vector) second fundamental form of the immersion $\phi$ can be written as
	\[ k(X,Y) = D_{\phi_* X} \phi_* Y - \phi_*(\nabla_X Y).\]
	Expanding in normal coordinates and taking the $g$-trace immediately gives us the result.
\end{proof}

The equation \eqref{eq:harmmap} is not well-posed as a hyperbolic system of partial differential equations. This is due to diffeomorphism invariance: if $\psi:M \to M$ is a diffeomorphism, then $\phi\circ \psi$ is another solution to \eqref{eq:harmmap}. So if we require $\psi$ to fix a spatial hypersurface and its first jet, we get immediately that the solution to the initial value problem is non-unique. 

The solution to this is well-known: one must fix a guage. A typical local gauge used is the \emph{graphical gauge}. As $\phi$ is a time-like immersion, locally, up to a permutation of the coordinate variables, we can assume $M$ is parametrized by $x^0, \ldots, x^d$, and write $y := (x^{d+1}, \ldots, x^N)$ as a function thereof. Then we recover the usual, local coordinate presentation of the time-like minimal hypersurface equation as the quasilinear system over $(1+d)$-dimensional Minkowski space
\begin{equation}\label{eq:graphical}
\frac{\partial}{\partial x^a} \frac{ \eta^{ab} \partial_b y}{\sqrt{ 1 + \eta^{cd} \langle\partial_c y, \partial_d y\rangle_\eta}} = 0.
\end{equation}
The standard theory of geometric wave equations (localizing to small, globally hyperbolic, coordinate neighborhoods where \eqref{eq:graphical} applies and then gluing by local uniqueness) gives well-posedness of the corresponding initial value problem for data in the Sobolev space $H^s$, $s > \frac{d}{2} + 2$; see \cite{Milbre2008} for a discussion of the local initial value problem in the coordinate formulation. 

We remark here that as the coordinate functions $x^0, \ldots, x^d$ themselves solve the wave equation \eqref{eq:harmmap}, the formulation \eqref{eq:graphical} is in fact writing the equation of motion in local harmonic/wave coordinates. Similar to the situation in general relativity (see Theorem 7.4 of \cite{Choque2009}), the local well-posedness theory can also be studied in a wavemap gauge to avoid the spatial localization step. 

\section{Co-moving gauge and conservation law}
\label{s:conserve}
For the axially symmetric set-up we are interested in, it is however more convenient to study the equation in a co-moving gauge. We endow the manifold $M$ with the following coordinate system: let $t = x^0(\phi)$ be the background time-coordinate function. Wherever $\phi$ is well-defined as an immersion, we can associate to $t$ the vector field $\tau$ such that $\tau(t) = 1$ and $\tau$ is orthogonal to the level sets of $t$. The flow of $\tau$ induces diffeomorphisms between level sets of $t$, which we denote by $\Sigma_t$, and so we can identify $M$, at least locally in time (and ``globally'' in space) with $(-T,T) \times \Sigma_0$. 

By construction, we have
\begin{equation}\label{eq:tautsharp}
	\mathrm{d}t^\sharp = \frac{\tau}{g(\tau,\tau)}.
\end{equation}
From \eqref{eq:harmmap} we know that $\mathrm{d}t^\sharp$ is a divergence-free vector field, which implies that $\mathrm{d} \iota_{\mathrm{d}t^\sharp} \mathrm{dvol}_g = 0$, where $\mathrm{dvol}_g$ is the spacetime volume form on $M$ given by the metric $g$. Now, let $\sigma$ denote the $d$-form $\frac{1}{\sqrt{|g(\tau,\tau)|}}\iota_\tau \mathrm{dvol}_g$, which is the volume form given by the induced metric on $\Sigma_t$. Our computations above indicate that
\begin{equation}\label{eq:cons}
	\mathcal{L}_\tau \frac{\sigma}{\sqrt{|g(\tau,\tau)|}} = \mathrm{d} \iota_\tau \iota_\tau \frac{\mathrm{dvol}_g}{|g(\tau,\tau)|} - \iota_\tau \mathrm{d} \iota_{\mathrm{d}t^\sharp} \mathrm{dvol}_g = 0.
\end{equation}
We note that the conservation law \eqref{eq:cons} is previously known (e.g.\ equation (2.19) in \cite{Hoppe2013}) and can be interpreted as a relativistic conservation of mass formula.

An immediate consequence is
\begin{prop}\label{prop:parttwo}
	Let $\Sigma$ be a $d$-dimensional manifold. Suppose for $t\in [0,T]$ there exists a continuous family of $C^2$ mappings $\phi_t: \Sigma \to \mathbf{R}^{d+1}$, such that the mapping $(0,T)\times \Sigma \ni (t,q) \mapsto (t,\phi_t(q)) \in \mathbf{R}^{1,d+1}$ is a smooth minimal immersion in the comoving gauge. Then $\phi_T$ is an immersion if and only if $g(\tau,\tau)$ remains bounded away from zero near $t = T$. 
\end{prop}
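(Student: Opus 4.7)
The plan is to translate the conservation law \eqref{eq:cons} into a pointwise algebraic identity in co-moving coordinates. In the coordinates $(t, q^1, \ldots, q^d)$ with $\tau = \partial_t$, the orthogonality condition $g(\tau, \partial_i) = 0$ renders the pulled-back metric block-diagonal, $g = g(\tau,\tau)\,dt^2 + \bar g_{ij}(t,q)\,dq^i\,dq^j$, where $\bar g_t$ is the pull-back by $\phi_t$ of the Euclidean metric on $\mathbf{R}^{d+1}$. Since $\phi_t \colon \Sigma^d \to \mathbf{R}^{d+1}$ is of codimension one, $\phi_t$ is an immersion at $q$ precisely when $\bar g_t(q)$ is positive-definite, equivalently $\det \bar g(t,q) > 0$.

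Writing $\alpha := \sqrt{|g(\tau,\tau)|}$, the spatial volume form is $\sigma = \sqrt{\det \bar g}\, dq^1 \wedge \cdots \wedge dq^d$, and the wedge of coordinate differentials is Lie-invariant along $\partial_t$. So \eqref{eq:cons} reduces to the scalar identity $\partial_t(\sqrt{\det \bar g}/\alpha) = 0$, which integrates (pointwise in $q$) to
\[
\sqrt{\det \bar g(t,q)} \;=\; \alpha(t,q)\, C(q),
\qquad
C(q) \;:=\; \frac{\sqrt{\det \bar g(t_0,q)}}{\alpha(t_0,q)} \;>\; 0,
\]
for all $t \in (0,T)$ and any fixed reference time $t_0 \in (0,T)$. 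Positivity of $C(q)$ uses the hypothesis that the spacetime map is an immersion on $(0,T) \times \Sigma$, so that $\bar g_{t_0}$ and $\alpha(t_0,\cdot)$ are both strictly positive.

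To conclude, I would invoke the regularity of the family. Since $\phi_t$ is $C^2$ in $q$ and continuous in $t$, the spatial metric $\bar g$ extends continuously to $[0,T]\times\Sigma$, and the displayed identity extends to $t = T$ by continuity (with $\alpha(T,q)$ defined through the identity). Hence pointwise in $q$, $\det \bar g(T,q) = 0$ if and only if $\alpha(T,q) = 0$; equivalently, $\phi_T$ fails to be an immersion at some $q$ exactly when $|g(\tau,\tau)|$ fails to remain bounded below as $t \to T^-$ at that point. On compact $\Sigma$ (the setting relevant for Theorem \ref{thm:rough}) pointwise and uniform versions of this statement coincide. The only delicate point---not a real obstacle---is that $\tau$ is a priori undefined at $t = T$, so one must pass to the limit $t \to T^-$ through the conservation identity rather than by directly evaluating $g(\tau,\tau)$ at $t = T$; the identity itself plays the role of an extension.
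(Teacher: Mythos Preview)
Your proof is correct and follows essentially the same approach as the paper's: both hinge on the conservation law \eqref{eq:cons}, which forces $\sigma/\sqrt{|g(\tau,\tau)|}$ to be $t$-independent, so that the spatial volume form degenerates if and only if $|g(\tau,\tau)|$ does. The paper's version is simply a one-line summary of what you have unpacked in coordinates; your additional care with the limit $t \to T^-$ and the pointwise-versus-uniform distinction is reasonable elaboration but not a different method.
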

\begin{proof}
	Since $\phi_T$ is an immersion if and only if $\mathrm{d}\phi_T$ is full rank, which is equivalent to the induced volume form $\sigma$ being non-degenerate, the claim then follows from the conservation law \eqref{eq:cons}. 
\end{proof}

Now take $U$ a small coordinate neighborhood on $\Sigma_0$, parametrized by $(y^1, \ldots, y^d)$. Using the flow of the vector field $\tau$ we extend this to a local coordinate system for $M$ with coordinate functions $(t= y^0, y^1, \ldots, y^d)$; in this coordinate system observe that $\tau = \partial_t$. By construction we require our immersion $\phi: M\to \mathbf{R}^{1,d+1}$ satisfy 
\[ t = x^0\circ \phi(t,y^1, \ldots, y^d).\]
And the induced metric on $M$ satisfies
\begin{equation}\left\{
\begin{aligned}
	g_{ij} &= \left\langle \frac{\partial}{\partial y^i} \phi, \frac{\partial}{\partial y^j} \phi \right\rangle_\eta, \quad i,j\in \{1, \ldots, d\};\\
	g_{0j} &= 0, \quad j \in \{1, \ldots, d\}; \\
	g_{00} &= g(\tau,\tau) = \left\langle \frac{\partial}{\partial t} \phi, \frac{\partial}{\partial t}\phi\right\rangle_\eta.
\end{aligned}\right.
\end{equation}
In this coordinate system then observe that 
\[ \sigma = \sqrt{\frac{|\det g|}{|g_{00}|}} \mathrm{d}y^1\wedge \cdots\wedge \mathrm{d}y^d.\]
Furthermore, by orthogonality the inverse metric 
\[ g^{00} = \frac{1}{g_{00}}.\]
So a direct consequence of the conservation law \eqref{eq:cons} is that the wave equation \eqref{eq:harmmap} satisfied by the coordinate components $x^\mu\circ\phi$ takes the form
\begin{equation}\label{eq:lbform}
	\partial^2_{tt} (x^\mu\circ \phi) = \frac{|g_{00}|}{\sqrt{|\det g|}} \sum_{i,j = 1}^d \frac{\partial}{\partial y^i} \left( \sqrt{|\det g|} g^{ij} \frac{\partial}{\partial y^j} (x^\mu \circ \phi) \right).
\end{equation}

\section{Axial symmetry}
\label{s:axialsym}

For the purposes of this paper, we take the following definition of axial symmetry. 
\begin{defn}
	Given a $d$-dimensional closed immersed submanifold $\Sigma\subset \mathbf{R}^N$, we say it is \emph{axially symmetric} if 
	\begin{enumerate}[(i)]
		\item there exists some $k\geq 1$, a set of numbers $d_1, \ldots, d_k \geq 1$ with $\sum_{i = 1}^k d_i = d -1$, and a parametrization $\Psi: \mathbf{S}^1 \times \mathbf{S}^{d_1}\times \cdots \times \mathbf{S}^{d_k} \to \Sigma$, such that
		\item for $(y, \theta_1, \ldots, \theta_k)\in  \mathbf{S}^1 \times \mathbf{S}^{d_1}\times \cdots \times \mathbf{S}^{d_k}$, the mapping 
		\begin{multline*} \Psi(y, \theta_1, \ldots, \theta_k) = \\(z(y), r_1(y) \theta_1, r_2(y)\theta_2, \ldots, r_k(y)\theta_k) \in \mathbf{R}^{N - d + 1 - k}\times \mathbf{R}^{d_1 + 1} \times \cdots \times \mathbf{R}^{d_k + 1} \cong \mathbf{R}^N \end{multline*}
			for some functions $z:\mathbf{S}^1 \to \mathbf{R}^{N-d+1-k}$ and $r_i: \mathbf{S}^1 \to \mathbf{R}_+$. 
	\end{enumerate}
\end{defn}
The definition above is a generalization of the classical notion of surfaces of revolution, which corresponds to $k = d_1 = 1$ and $N = 3$. 
Borrowing from the classical terminology we also refer to the function $(z, r_1, \ldots, r_k): \mathbf{S}^1 \to \mathbf{R}^{N-d+1-k}\times (\mathbf{R}_+)^k$ as ``the generatrix'' of $\Sigma$. Notice that we have the freedom to reparametrize $y$. Adding a dependence on the time-parameter $t$ to the generatrix, the above definition also generalizes in the obvious way to $d+1$-dimensional time-like immersed submanifolds of $\mathbf{R}^{1,N}$. Is is also obvious from the construction that the definition is compatible with the co-moving gauge condition of the previous section.

\begin{rmk}
	If we take $k = 0$ in the definition above, we see that the resulting object is simply an immersion of $\mathbf{S}^1$ in some Euclidean space, and we recover the case of cosmic strings. Our definition requires $k \geq 1$ simply for convenience of stating the main theorem later: as evident by the analysis of \cite{JeNoOr2015} the ``finite-time collapse'' property doesn't always hold for $k = 0$. 

	On the other hand, if we take $\sum d_i = d$ instead of $d -1$, then we have a product of spheres which collapses in finite time by the convexity argument mentioned in the introduction. 
\end{rmk}

Let $M \cong (t_1, t_2)\times \Sigma$ be an axially symmetric time-like immersed subfmanifold in Minkowski space $\mathbf{R}^{1,N}$. The induced metric on $M$ in co-moving gauge is given in warped-product form by
\begin{equation}\label{eq:metric}
	g = g_{tt} \mathrm{d}t^2 + g_{yy} \mathrm{d}y^2 + \sum_{i = 1}^k (r_i)^2 g_{\mathbf{S}^{d_i}}
\end{equation}
where $g_{\mathbf{S}^{d_i}}$ are the standard metrics of the unit spheres in $\mathbf{R}^{d_i+1}$, and
\begin{subequations}
	\begin{align}
		g_{tt} &= -1 + |\partial_t z|^2 + \sum_{i = 1}^k (\partial_t r_i)^2, \\
		g_{yy} &= |\partial_y z|^2 + \sum_{i = 1}^k (\partial_y r_i)^2.
	\end{align}
\end{subequations}
We remark that the time-like condition is enforced by $g_{tt} < 0$. The corresponding volume form 
\[ \mathrm{dvol}_g = \sqrt{|\det g|}~ \mathrm{d}t \wedge \mathrm{d}y \wedge \mathrm{dvol}_{\mathbf{S}^{d_1}} \wedge \cdots \wedge \mathrm{dvol}_{\mathbf{S}^{d_k}}\]
has coefficient
\begin{equation}\label{eq:detg}
	\sqrt{|\det g|} = \sqrt{|g_{tt}| g_{yy}} \prod_{i = 1}^k (r_i)^{d_i}.
\end{equation}

With the parametrization $y$ of $\mathbf{S}^1$ still taken to be arbitrary, the equations of motion \eqref{eq:lbform} for $M$ being a time-like minimal immersion can be computed to be the following system of equations for the generatrix.
\begin{equation}\label{eq:generalgeneratrix}
	\left\{ \begin{aligned}
		- \partial^2_{tt} z + \frac{|g_{tt}|}{\sqrt{|\det g|}} \partial_y \left( \frac{\sqrt{|\det g|}}{|g_{yy}|} \partial_y z\right) &= 0; \\
		- \partial^2_{tt} r_j + \frac{|g_{tt}|}{\sqrt{|\det g|}} \partial_y \left( \frac{\sqrt{|\det g|}}{|g_{yy}|} \partial_y r_j\right) &= \frac{d_j |g_{tt}|}{r_j}, \quad j \in \{1, \ldots, k\}.
		\end{aligned}\right.
\end{equation}
Using the freedom of reparametrization for $y$, we can fix, for one $t_0\in (t_1,t_2)$, that for some $C > 0$ that
\begin{equation}\label{eq:gaugefix}
	\frac{|g_{tt}|}{\sqrt{|\det g|}}(t_0,y) = C \text{ for all } y\in \mathbf{S}^1.
\end{equation}
Then \eqref{eq:cons} implies the same equality holds true for all $t\in (t_1,t_2)$. 

\begin{rmk}
	The gauge condition \eqref{eq:gaugefix} is the analogue, in our construction, of the ``orthogonal gauge condition'' of \cite{BeHoNO2010} that is used in \cite{NguTia2013} and \cite{JeNoOr2015}. The proof is by integrating the change of variables formula (chain rule) and the constant $C$ is chosen to fix $y$ to be $2\pi$ periodic. Note further that for the Cauchy problem \eqref{eq:gaugefix} can be implemented on the initial data, as the metric $g$ depends only on up-to-one derivatives of the generatrix functions.
\end{rmk}

Under the gauge condition \eqref{eq:gaugefix} together with \eqref{eq:detg}, the equations of motion \eqref{eq:generalgeneratrix} can be further reduced to 
\begin{equation}\label{eq:axisymsys}
	\left\{ \begin{aligned}
			- \partial^2_{tt} z + C^2\partial_y \left( r_1^{2d_1}r_2^{2d_2} \cdots r_k^{2d_k} \partial_y z\right) &= 0; \\
		- \partial^2_{tt} r_j +  C^2\partial_y \left( r_1^{2d_1}r_2^{2d_2} \cdots r_k^{2d_k} \partial_y r_j\right) &= \frac{d_j |g_{tt}|}{r_j}, \quad j \in \{1, \ldots, k\}.
		\end{aligned}\right.
\end{equation}
The main beneficial feature of \eqref{eq:axisymsys} is that the coefficients of the principal part of the the equations no longer depend on the derivatives of the generatrix functions, and that the equations are hyperbolic so long as the product $r_1\cdots r_k \neq 0$. 

\begin{prop}[Preservation of gauge]\label{prop:gaugepre}
	Let $(z, r_1, \ldots, r_k)$ be a smooth solution to \eqref{eq:axisymsys} with the product $r_1\cdots r_k > 0$, such that at some initial time $t_0$ the gauge condition \eqref{eq:gaugefix} as well as the orthogonality (co-moving) gauge condition 
	\begin{equation}\label{eq:comoving}
		0 = g_{ty} := \langle \partial_t z, \partial_y z\rangle + \sum_{i = 1}^k \langle \partial_t r, \partial_y r\rangle
	\end{equation}
	are satisfied. Then both \eqref{eq:gaugefix} and \eqref{eq:comoving} are satisfied for all time. 
\end{prop}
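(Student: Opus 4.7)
The plan is to introduce the ``gauge defect'' quantities
\[ \mathcal{E} := g_{tt} + C^2 R g_{yy}, \qquad \mathcal{M} := g_{ty}, \qquad R := \prod_{i=1}^{k} r_i^{2d_i}, \]
observe that they encode exactly the two gauges---$\mathcal{M} = 0$ is \eqref{eq:comoving}, while $\mathcal{E} = 0$ together with $g_{tt} < 0$ gives $|g_{tt}| = C^2 R g_{yy}$, so by \eqref{eq:detg} also $|g_{tt}|/\sqrt{|\det g|} = C$, i.e., \eqref{eq:gaugefix}---and then show that under the evolution \eqref{eq:axisymsys}, the pair $(\mathcal{E},\mathcal{M})$ satisfies a closed, linear, homogeneous, first-order hyperbolic system whose coefficients depend smoothly on the known solution. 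Since both $\mathcal{E}$ and $\mathcal{M}$ vanish identically on $\mathbf{S}^1$ at $t = t_0$ by hypothesis, the standard $L^2$ energy estimate then forces $\mathcal{E} \equiv \mathcal{M} \equiv 0$ on the full interval of existence, which is the proposition.

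The derivation is a direct computation. I would differentiate $g_{tt}$, $g_{yy}$, $g_{ty}$ in $t$ using their formulas from Section \ref{s:axialsym}, substitute $\partial_{tt} z$ and $\partial_{tt} r_j$ from \eqref{eq:axisymsys}, and rearrange via two standard identities: $2\partial_{yy} u \cdot \partial_t u = 2\partial_y(\partial_y u \cdot \partial_t u) - \partial_t |\partial_y u|^2$ applied to each scalar component of the generatrix, which converts the remaining second-order $y$-derivatives into first derivatives of $g_{yy}$ and $g_{ty}$; and $\sum_j d_j \partial_t r_j/r_j = \partial_t R/(2R)$ (together with its $\partial_y$ analogue), which packages the inhomogeneous sources $d_j |g_{tt}|/r_j$ appearing in \eqref{eq:axisymsys} as scalar multiples of $|g_{tt}| \partial_t R/R$ and $|g_{tt}| \partial_y R/R$. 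Combining these and then forming $\partial_t \mathcal{E}$ via $\partial_t(C^2 R g_{yy}) = C^2 R \partial_t g_{yy} + C^2 g_{yy}\partial_t R$, the computation should collapse to
\[ \partial_t \mathcal{E} = 2 C^2 \partial_y(R \mathcal{M}) + \frac{\partial_t R}{R}\,\mathcal{E}, \qquad \partial_t \mathcal{M} = \frac{1}{2R} \partial_y(R \mathcal{E}). \]
The principal part has characteristic speeds $\pm C \sqrt{R}$, which are real by the hypothesis $r_1 \cdots r_k > 0$, and $\operatorname{diag}(1, 4C^2 R)$ is a positive-definite symmetrizer, so the system is symmetric hyperbolic with smooth coefficients.

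The step that is not routine is verifying that the system genuinely \emph{closes}: the only worrisome intermediate terms are sources proportional to $|g_{tt}| \partial_t R/R$ in $\partial_t \mathcal{E}$ and $|g_{tt}| \partial_y R/(2R)$ in $\partial_t \mathcal{M}$, either of which, if left uncancelled, would produce an inhomogeneity and destroy the uniqueness argument. Both are eliminated by the algebraic identity $C^2 R g_{yy} - |g_{tt}| = \mathcal{E}$ (valid because $g_{tt} < 0$), which recombines them into the $\mathcal{E}$-multiples displayed above. This cancellation is the symmetry-reduced, local counterpart of the divergence identity \eqref{eq:cons}, and is precisely what makes the constraint-propagation argument run.
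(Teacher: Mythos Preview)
Your proposal is correct and is essentially the paper's own proof: the paper sets $X = R^{-1} g_{tt} + C^2 g_{yy}$ and $Y = g_{ty}$, derives the closed linear hyperbolic system $\partial_t X = 2C^2 R^{-1}\partial_y(RY)$, $\partial_t Y = \tfrac12 R^{-1}\partial_y(R^2 X)$, and concludes via the associated energy identity and Gronwall. Your $(\mathcal{E},\mathcal{M}) = (RX, Y)$ is just a rescaling of the paper's variables, and your displayed system is exactly the paper's after this substitution; your explicit identification of the symmetrizer and of the $|g_{tt}|$-cancellation (which the paper leaves implicit, and which indeed uses $g_{tt}<0$) adds helpful detail but does not change the argument.
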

\begin{proof}
	Write $R = r_1^{2d_1} \cdots r_k^{2d_k}$. Let $X = R^{-1} g_{tt} + C^2 g_{yy}$, and let $Y = g_{ty}$. 
	That \eqref{eq:gaugefix} and \eqref{eq:comoving} are satisfied at an initial time $t_0$ means $X(t_0, \cdot) = 0 = Y(t_0,\cdot)$. A direct computation using \eqref{eq:axisymsys} shows that $X$ and $Y$ solves the following hyperbolic system of partial differential equations:
	\[ \left\{
	\begin{aligned}
		\partial_t Y &= \frac12 R^{-1} \partial_y (R^2 X); \\
		\partial_t X &= 2 C^2 R^{-1} \partial_y (R Y) . 
	\end{aligned} \right.\]
	In particular this implies the energy identity 
	\[ \partial_t \int_{\mathbf{S}^1} Y^2 + \frac{1}{4C^2} R X^2 ~\mathrm{d}y = \int_{\mathbf{S}^1} \frac{1}{4C^2} (\partial_t R)X^2 + (\partial_y R) XY~\mathrm{d}y .\]
	So by Gronwall's lemma and the assumption that $R > 0$ and $(z, r_1, \ldots, r_k)$ is smooth, we have that the gauge conditions are propagated. 
\end{proof}

The above proposition implies that solutions of the reduced equations \eqref{eq:axisymsys} with the gauge conditions \eqref{eq:gaugefix} and \eqref{eq:comoving} satisfied initially necessarily gives rise to axially symmetric time-like immersed minimal submanifolds, provided that the corresponding metric \eqref{eq:metric} is non-degenerate and timelike. 

\section{The Main Theorem}
\label{s:breakdown}

The following breakdown criterion is standard with a proof using the energy method and Sobolev interpolation; see e.g. \cite[Chapter 6]{Horman1997}. 
\begin{prop}\label{prop:stdbd}
	Suppose $(z, r_1, \ldots, r_k)$ is a  smooth solution to the system \eqref{eq:axisymsys} on $D = (t_1, t_2)\times \mathbf{S}^1$, such that $\inf_{D} r_1r_2\cdots r_k > 0$, and $\sup_{D} |\partial^\alpha z,r| < \infty$ for all multi-indices $\alpha$ satisfying $|\alpha| \leq 1$.
	Then there exists some $\epsilon > 0$ such that $(z, r_1, \ldots, r_k)$ extends to a smooth solution of \eqref{eq:axisymsys} on $(t_1 - \epsilon, t_2 + \epsilon) \times \mathbf{S}^1$. 
\end{prop}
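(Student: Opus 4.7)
The plan is to implement a high-order energy estimate adapted to \eqref{eq:axisymsys}, exploiting the key structural feature that the principal coefficient $R = r_1^{2d_1} \cdots r_k^{2d_k}$ depends only on the unknowns, \emph{not} on their derivatives. This places the problem in the setting of classical energy methods for diagonal wave systems with variable coefficients that are smooth functions of the unknowns themselves, rather than the fully quasilinear regime; the advantage is that the assumed $W^{1,\infty}$ control on the unknowns already furnishes $W^{1,\infty}$ control on the principal coefficients, which is what the standard Hörmander-type estimate requires.

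First I would fix an integer $s \geq 3$ so that $H^s(\mathbf{S}^1) \hookrightarrow C^2(\mathbf{S}^1)$, set $w = (z, r_1, \ldots, r_k)$ (treating each scalar component separately), and define
\[
	E_s(t) = \sum_{j + \ell \leq s} \int_{\mathbf{S}^1} \bigl| \partial_t^j \partial_y^\ell w(t,y) \bigr|^2 \, \mathrm{d}y.
\]
The hypothesis $\inf_D r_1 \cdots r_k > 0$ together with the $W^{1,\infty}$ bound on $w$ provides uniform positive upper and lower bounds on $R$, uniform bounds on $\partial_y R$ and $\partial_t R$, and uniform bounds on the forcing term $d_j |g_{tt}|/r_j$. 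Applying $\partial_t^j \partial_y^\ell$ to each equation and pairing with $\partial_t^{j+1}\partial_y^\ell w$, the standard energy identity for $-\partial_t^2 + C^2 \partial_y(R \partial_y \cdot)$, combined with Moser-type estimates for $\|R(w)\|_{H^s}$ and for the composition $|g_{tt}|/r_j$, yields a differential inequality
\[
	\frac{\mathrm{d}}{\mathrm{d}t} E_s(t) \leq P\bigl(\|w\|_{W^{1,\infty}(D)},\, (\textstyle\inf_D R)^{-1}\bigr)\,\bigl(1 + E_s(t)\bigr),
\]
where $P$ is a polynomial depending only on $s$, $k$, the $d_j$, and $C$. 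Since both quantities inside $P$ are finite by hypothesis, Gronwall's inequality implies $E_s$ remains bounded as $t \to t_1^+$ and $t \to t_2^-$.

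Consequently $w$ extends to a function in $C^0_t H^s_y \cap C^1_t H^{s-1}_y$ on the closed slab $[t_1,t_2] \times \mathbf{S}^1$, whose values at the endpoint times furnish admissible Cauchy data (the hyperbolicity condition $R > 0$ is preserved because the lower bound on $r_1 \cdots r_k$ persists into the limit by uniform $W^{1,\infty}$ bounds). Applying the local well-posedness theory for \eqref{eq:axisymsys} at $t = t_2$ and $t = t_1$ produces the desired extension to $(t_1 - \epsilon, t_2 + \epsilon) \times \mathbf{S}^1$ for some $\epsilon > 0$; full smoothness on the extension then follows by iterating the estimate for every $s \geq 3$ and invoking persistence of regularity. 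I expect the only nontrivial bookkeeping to be the Moser composition estimates for $R(w)$ and for the reciprocal factors $r_j^{-1}$, where the uniform lower bound on $r_1 \cdots r_k$ must be invoked at every order, but these are routine in one spatial dimension and are essentially what is carried out in \cite{Horman1997}.
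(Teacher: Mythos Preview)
Your proposal is correct and is precisely the argument the paper has in mind: the paper does not give a proof but simply notes that the result is ``standard with a proof using the energy method and Sobolev interpolation'' and refers to \cite[Chapter 6]{Horman1997}, which is exactly the high-order energy/Moser/Gronwall scheme you outline. The one point worth making explicit in your write-up is that the lower bound on each individual $r_j$ (needed for the $r_j^{-1}$ factors) follows from the assumed lower bound on the product together with the $L^\infty$ upper bounds on the remaining factors.
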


Now let us return to the actual problem of immersed time-like minimal submanifolds. Notice that whenever $M$ is a smooth immersed time-like minimal submanifold of $\mathbf{R}^{1,N}$, the time-like vector field $\tau$ is well-defined, as is the \emph{non-degenerate} spatial-volume form $\sigma$. As a consequence of Proposition \ref{prop:parttwo}, this means that the obstructions to extending $M$ as a smooth immersed time-like minimal submanifold are (a) loss of regularity or (b) loss of immersivity. 

From the discussion in section \ref{s:axialsym}, we see that any spatially-compact \emph{axially symmetric} smooth immersed time-like minimal submanifold in $\mathbf{R}^{1,N}$ can be parametrized in such a way that \eqref{eq:comoving} and \eqref{eq:gaugefix}, as well as the requirement $g_{tt} < 0$, hold, and that in this gauge the generatrix functions satisfy \eqref{eq:axisymsys}. 
Conversely, by Proposition \ref{prop:gaugepre} we see that any smooth generatrix on $(t_1,t_2)\times\mathbf{S}^1$ solving \eqref{eq:axisymsys} and satisfying the condition $g_{tt} < 0$, and, for some fixed time $t_0$, verify \eqref{eq:comoving} and \eqref{eq:gaugefix}, represents a spatially-compact axially symmetric smooth immersed time-like minimal submanifold. 
Therefore we will identify the two. 
\begin{defn}
	For the remainder of this section, an \emph{axi-symmetric smooth membrane} $M$ is a smooth solution $(z,r_1, \ldots, r_k)$ of \eqref{eq:axisymsys} on $(t_1,t_2)\times \mathbf{S}^1$ such that the conditions \eqref{eq:comoving}, \eqref{eq:gaugefix}, and $g_{tt} < 0$ are satisfied. Given another axi-symmetric smooth membrane $M'$ with associated times $t_1', t_2'$ as well as solution $(z', r_1', \ldots, r_k')$, we say that $M'$ is an \emph{extension} of $M$ if $t_1' \leq t_1 < t_2 \leq t_2'$ and $(z', r_1', \ldots, r_k')|_{(t_1,t_2)} = (z, r_1, \ldots, r_k)$. The extension is \emph{strict} if at least one of $t_1' < t_1$ or $t_2 < t_2'$ is true. 
	Finally, an axi-symmetric smooth membrane $M$ is said to be \emph{maximal} if it does not admit any strict extensions.
\end{defn}
By the uniqueness of smooth solutions to nonlinear wave equations, immediately we see that any axi-symmetric smooth membrane $M$ admits a unique, maximal extension $\tilde{M}$. Our main result is the following.
\begin{thm}[Main Theorem] \label{thm:main}
	The maximal extension $\tilde{M}$ of any axi-symmetric smooth membrane $M$ terminates in finite time. That is to say, the time domain $(T_1, T_2)$ of $\tilde{M}$ is such that at least one of $T_1$ or $T_2$ is finite. Furthermore, we have the following asymptotic control as $t\nearrow T_2$ and $t\searrow T_1$:

		In the case of finite $T_1$ or $T_2$ it holds that
		\begin{equation}\label{eq:lossim1} 
		\liminf \left[ \inf_{y} |g_{tt}| r_1\cdots r_k \right](t) = 0;
		\end{equation}

		In the case of infinite $T_1$ or $T_2$ it holds that 
			\begin{equation}\label{eq:lossim2} 
				\liminf \int_{\mathbf{S}^1} |g_{tt}|(t,y) ~\mathrm{d}y =  0.
			\end{equation}
\end{thm}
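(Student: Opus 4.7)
The plan is to base the proof on a convexity argument applied to
\[ f_j(t) := \int_{\mathbf{S}^1} r_j(t,y)\,\mathrm{d}y. \]
Integrating the second line of \eqref{eq:axisymsys} over $\mathbf{S}^1$ kills the spatial divergence by periodicity and leaves
\[ f_j''(t) = -d_j \int_{\mathbf{S}^1} \frac{|g_{tt}|(t,y)}{r_j(t,y)}\,\mathrm{d}y < 0, \]
where strict negativity uses $|g_{tt}|>0$ (time-likeness) and $r_j>0$. So each $f_j$ is strictly concave and strictly positive. This already rules out doubly-infinite existence: since $f_j'$ is strictly decreasing, a value $f_j'(t_0)>0$ sends $f_j\to-\infty$ as $t\to-\infty$, while $f_j'(t_0)\le 0$ sends $f_j\to-\infty$ as $t\to+\infty$ (in the borderline case $f_j'(t_0)=0$, pick any $t_1>t_0$ and use $f_j'(t_1)<0$). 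Hence at least one of $T_1,T_2$ is finite. By the $t\mapsto -t$ symmetry of \eqref{eq:axisymsys} it suffices to treat $T_2$ in what follows.

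\emph{Finite $T_2$.} Suppose for contradiction $\inf_y|g_{tt}|r_1\cdots r_k\ge\delta>0$ uniformly on some terminal interval $(T_2-\eta,T_2)$. The time-like condition gives the automatic pointwise bound $|\partial_t z|,|\partial_t r_j|\le 1$, which integrated in $t$ yields $|z|,r_j\le M$ for some constant $M$ on this interval. Combining $\prod r_j\ge\delta/|g_{tt}|\ge\delta$ with $r_j\le M$ forces each individual $r_j$ to be bounded \emph{below} as well (by $\delta/M^{k-1}$), and so $R:=\prod r_j^{2d_j}$ sits between two positive constants and $|g_{tt}|\ge\delta/M^k$. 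The gauge condition \eqref{eq:gaugefix} together with \eqref{eq:detg} gives the identity $g_{yy}=|g_{tt}|/(C^2 R)$, so that $g_{yy}$ is bounded above, which in turn forces $|\partial_y z|,|\partial_y r_j|\in L^\infty$ since $g_{yy}=|\partial_y z|^2+\sum(\partial_y r_j)^2$. All hypotheses of Proposition~\ref{prop:stdbd} are then met, producing a smooth extension past $T_2$, contradicting maximality.

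\emph{Infinite $T_2$.} Suppose for contradiction $\int_{\mathbf{S}^1}|g_{tt}|\,\mathrm{d}y\ge\epsilon>0$ for all $t\ge t_0$. Cauchy--Schwarz together with the automatic bound $|g_{tt}|\le 1$ yields
\[ \epsilon^2 \le \left(\int_{\mathbf{S}^1} |g_{tt}|\,\mathrm{d}y\right)^2 \le \int_{\mathbf{S}^1}\frac{|g_{tt}|^2}{r_j}\,\mathrm{d}y\cdot f_j(t) \le \int_{\mathbf{S}^1}\frac{|g_{tt}|}{r_j}\,\mathrm{d}y\cdot f_j(t) = -\frac{f_j''(t)f_j(t)}{d_j}. \]
The same monotonicity argument as in the first paragraph, applied now on the half-line $(T_1,\infty)$, shows $f_j'(t)>0$ identically (else $f_j$ becomes negative at large $t$), so $f_j(t)\le f_j(t_0)+f_j'(t_0)(t-t_0)$ is at most linear. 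Then $-f_j''(t)\ge d_j\epsilon^2/f_j(t)\gtrsim 1/(1+t)$, which on integration produces $f_j'(t_0)-f_j'(t)\to\infty$, forcing $f_j'(t)\to-\infty$ and contradicting $f_j'>0$.

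The most delicate piece is the infinite-endpoint case, where three distinct features of $f_j$ --- the strict concavity-from-below produced by Cauchy--Schwarz, the positivity, and the monotonicity of $f_j'$ --- must be orchestrated simultaneously to force a sign change in $f_j'$. The finite-endpoint case, by contrast, is essentially bookkeeping: one needs to upgrade the product lower bound $|g_{tt}|r_1\cdots r_k\ge\delta$ into pointwise upper and lower bounds on each $r_j$ and on $|g_{tt}|$, and thence into $L^\infty$ control of the spatial first derivatives via the gauge identity, so that Proposition~\ref{prop:stdbd} may be invoked.
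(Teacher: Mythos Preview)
Your proof is correct and follows the same overall strategy as the paper: strict concavity of $\overline{r_j}(t)=\int_{\mathbf{S}^1} r_j\,\mathrm{d}y$ to rule out doubly-infinite existence, and the gauge identity $g_{yy}=C^{-2}|g_{tt}|/R$ together with Proposition~\ref{prop:stdbd} for the finite-endpoint breakdown criterion. The one technical deviation is in the infinite-endpoint case: the paper obtains $-\ddot{\overline{r_j}}\gtrsim\epsilon/t$ directly from the pointwise bound $r_j(t,y)\le r_j(T_0,y)+|t-T_0|$ (using $|\partial_t r_j|<1$), whereas you reach the same non-integrable lower bound via Cauchy--Schwarz combined with the tangent-line estimate $f_j(t)\le f_j(t_0)+f_j'(t_0)(t-t_0)$ from concavity. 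Your route is marginally more self-contained in that it does not re-invoke the pointwise velocity bound (only $|g_{tt}|\le 1$), but both arguments are of the same length and rest on identical inputs.
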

\begin{proof}
	We first prove the extension criterion: if $\inf_M |g_{tt}| r_1\cdots r_k > 0$, and if $|t_1|, |t_2| < \infty$, then $M$ is not maximal. Note that this implies \eqref{eq:lossim1}. Observe by assumption that $1 + g_{tt} \geq 0$ and our assumption that $M$ satisfies $g_{tt} < 0$ implies that $|\partial_t z|^2 + \sum |\partial_t r_i|^2 < 1$. This further implies that $r_i \leq r_i(t_0) + |t_2 - t_1|$ and $|z| \leq |z(t_0)| + |t_2 - t_1|$ for any $t_0 \in (t_1,t_2)$. Hence we conclude 
	\[ \inf_M |g_{tt}| > 0.\]
	Next, since $|g_{tt}| \leq 1$ we also have 
	\[ \inf_M r_1 \cdots r_k > 0.\]
	The assumption that \eqref{eq:gaugefix} holds implies that 
	\[ |\partial_y z|^2 + \sum (\partial_y r_i)^2 = g_{yy} = \frac{C^{-2} |g_{tt}|}{(r_1\cdots r_k)^2} \]
	is bounded on $M$. Thus the hypothesis of Proposition \ref{prop:stdbd} is satisfied and $(z, r_1, \ldots, r_k)$ extends as a solution to \eqref{eq:axisymsys}. By Proposition \ref{prop:gaugepre} the conditions \eqref{eq:comoving} and \eqref{eq:gaugefix} still hold for the extended solutions, as they held for $M$. And by continuity of the generatrix and compactness of $\mathbf{S}^1$, since $\inf_M |g_{tt}|r_1\cdots r_k > 0$ it must remain bounded for a sufficiently small extension.

	Next we examine the equations of motion \eqref{eq:axisymsys}. For $j\in \{1, \ldots, k\}$, the equation for $r_j$ reads 
	\[ - \partial^2_{tt} r_j +  C^2\partial_y \left( r_1^{2d_1}r_2^{2d_2} \cdots r_k^{2d_k} \partial_y r_j\right) = \frac{d_j |g_{tt}|}{r_j}.\]
	Integrate in $y$ to define the mean 
	\[ \overline{r_j}(t) := \int_{\mathbf{S}^1} r_j(t,y)~\mathrm{d}y \]
	we see that 
	\begin{equation}\label{eq:convmeanrad} 
	\ddot{\overline{r_j}} = \int_{\mathbf{S}^1} \frac{d_j g_{tt}}{r_j}~\mathrm{d}y < 0.
	\end{equation}
	The convexity condition \eqref{eq:convmeanrad} immediately means that $\overline{r_j}$ is would reach zero in finite time: if $\dot{\overline{r_j}}(t_0) \leq 0$ for some $t_0\in (T_1,T_2)$, then it is monotonic decreasing for $t > T_0$ and thus $T_2 < \infty$; similarly for $T_1$ if $\dot{\overline{r_j}}(t_0) \geq 0$. This shows that $T_1$ and $T_2$ cannot both be infinite. 

	It remains to show \eqref{eq:lossim2} when one of $T_1, T_2$ is infinite. Without loss of generality suppose it is $T_2$. Supposing for contradiction that \eqref{eq:lossim2} doesn't hold, and thus there exists $T_0\in (T_1, \infty)$ and $\epsilon > 0$ such that for every $t\in (T_0,\infty)$
	\[ \int_{\mathbf{S}^1} |g_{tt}|(t,y) ~\mathrm{d}y > \epsilon.\]
	Then recalling that we have the \emph{a priori} bound that $r_j(t,y) \leq r_j(T_0,y) + |t - T_0|$, we obtain from \eqref{eq:convmeanrad} that
	\[ \ddot{\overline{r_j}}(t) = \int_{\mathbf{S}^1} \frac{d_j g_{tt}}{r_j}~\mathrm{d}y \leq \frac{d_j}{|t - T_0| + \sup_{y} r_j(T_0,y)} \int g_{tt} ~\mathrm{d}y \leq \frac{ - d_j \epsilon}{|t - T_0| + \sup_{y} r_j(T_0,y)}.\]
	As $\frac{1}{|t-T_0|}$ is not integrable, this implies that within finite time $\dot{\overline{r_j}} < 0$ and this contradicts the assumption that $T_2 = \infty$. 
\end{proof}

\begin{rmk}
	Notice that as part of the proof, the decay of $|g_{tt}| r_1\cdots r_k$ to zero must hold as we approach any first singular points. This condition in particular implies that the obstruction to further extension is in fact loss of immersivity. On the other hand, we do not rule out the possibility that the loss of immersivity is accompanied by loss of regularity! This should be contrasted with the results of \cite{NguTia2013, JeNoOr2015} where the reduced equations remain always regular and the singularity is purely geometric. One can easily see, however, by Proposition \ref{prop:stdbd} that if $r_1\cdots r_k$ remains bounded away from zero, then the singularity is one with only loss of immersivity: there exists smooth solutions to \eqref{eq:axisymsys} continuing past the singularity. 
\end{rmk}

\section{Noncompact case}
\label{s:noncompact}

In this section we collect some remarks concerning the application of the results of this paper to non-compact submanifolds. The results of sections \ref{s:WMcnx} and \ref{s:conserve} are general and largely remain unchanged. Proposition \ref{prop:parttwo} can be naturally localized in space since the result is essentially pointwise. In section \ref{s:axialsym}, in defining axial symmetric submanifolds, the first factor $\mathbf{S}^1$ parametrized by $y$ now obviously have to be replaced by $\mathbf{R}$. The availability of the gauge \eqref{eq:gaugefix} remains unchanged, except we can take $C\equiv 1$ (by rescaling on $\mathbf{R}$). Using finite speed of propagation for hyperbolic systems, Proposition \ref{prop:gaugepre} also remains true; the proof just requires suitable spatial localization. 

In terms of the main results, after spatially localizing to globally hyperbolic domains an analogue to Proposition \ref{prop:stdbd} also holds in the non-compact case; this is again using finite-speed-propagation property for hyperbolic systems. The main difference occurs in Theorem \ref{thm:main}. Instead of fixing the upper and lower times, one can consider instead the maximal globally hyperbolic extension, with the domain (of the generatrix, in other words the variables $(t,y)$) some open subset $U$ of $\mathbf{R}\times\mathbf{R}$. Suppose $(t_0, y_0)$ is a point in the future boundary portion of $\partial U$. Since $U$ is globally hyperbolic, we can speak of the ``chronological past'' $\mathcal{I}^-(t_0,y_0)$ of $(t_0,y_0)$ as an open subset of $U$ (this is the union of all time-like curves whose upper terminus is $(t_0,y_0)$). We say that $(t_0,y_0)$ is \emph{space-like} if the closure of $\mathcal{I}^-(t_0,y_0)$ is contained in $U$. 

With this definition, spatially localizing the proof of Theorem \ref{thm:main} gives us the following analogue of \eqref{eq:lossim1}:  if $(t_0,y_0)\in \partial U$ is a spacelike future boundary point and if $t_0 < \infty$, then we can conclude $\inf_{\mathcal{I}^{-}(t_0,y_0)} |g_{tt}| r_1\cdots r_k = 0$. The rest of Theorem \ref{thm:main} do not hold, as the catenoid (and certain of its perturbations) provide counterexamples.

\bibliographystyle{amsalpha}
\bibliography{mixmaster.bib}

\end{document}